\def\ps@pprintTitle{%
  \let\@oddhead\@empty
  \let\@evenhead\@empty
  \def\@oddfoot{}%
  \let\@evenfoot\@oddfoot}
\newcommand{\opnorm}{\@ifstar\@opnorms\@opnorm}
\newcommand{\@opnorms}[1]{%
  \left|\mkern-1.5mu\left|\mkern-1.5mu\left|
   #1
  \right|\mkern-1.5mu\right|\mkern-1.5mu\right|
}
\newcommand{\@opnorm}[2][]{%
  \mathopen{#1|\mkern-1.5mu#1|\mkern-1.5mu#1|}
  #2
  \mathclose{#1|\mkern-1.5mu#1|\mkern-1.5mu#1|}
}
\pgfplotsset{compat=1.14}
\newtheorem{lemma}{Lemma}
\newtheorem{remark}[lemma]{Remark}
\newcommand{\blue}[1]{{~}}
\newcommand{\N}{ \mathcal{N} }
\newcommand{\vt}{\mathbf{v}}
\newcommand{\Vt}{W}
\newcommand{\ft}{\mathbf{f}}
\newcommand{\ut}{\boldsymbol{\omega}}
\newcommand{\xt}{\mathbf{x}}
\newcommand{\zt}{\mathbf{z}}
\newcommand{\nt}{\vec{n}}
\renewcommand{\div}{\operatorname{div}}
\newcommand{\V}{{\cal V}}
\renewcommand{\L}{{\cal L}}
\newcommand{\dd}{\,\mathrm{d}}
\begin{document}

\begin{frontmatter}
  
  \title{A priori and a posteriori error estimates for the Deep Ritz method 
    applied to the Laplace and Stokes problem}
  
  
  \author[1]{P. Minakowski}
  \ead{piotr.minakowski@ovgu.de}
  
  \author[1]{T. Richter}
  \ead{thomas.richter@ovgu.de}
  \ead[url]{numerics.ovgu.de}
  
  \address[1]{Otto-von-Guericke Universit\"at  Magdeburg,  
    Universit\"atsplatz 2, 39106, Magdeburg, Germany}
  
  \begin{abstract}
    We analyze neural network solutions to partial differential equations
    obtained with \emph{Physics Informed Neural Networks}.
    In particular, we apply tools of classical finite element error analysis to
    obtain conclusions about the error of the \emph{Deep Ritz} method applied 
    to 
    the Laplace and the Stokes equations. 
    Further, we develop an a posteriori error estimator for neural network 
    approximations of
    partial differential equations. The proposed approach is based on
    the \emph{dual weighted residual estimator}.  It is destined to serve as
    a stopping criterion that guarantees the accuracy of  
    the solution independently of the design of the neural network training.
    The result is equipped with computational examples for Laplace and Stokes 
    problems. 
  \end{abstract}
  
  \begin{keyword}
    neural networks, finite elements, error estimates, dual weighted residual 
    method, a posteriori error estimates
  \end{keyword}
  
\end{frontmatter}

\section{Introduction}\label{sec:intro}

In recent years, the emerging field of (deep) neural networks has
reached the numerical approximation of partial differential
equations (PDE). Several approaches have been proposed that aim at
representing the solution to the PDE by a deep neural
network. Many of these approaches have demonstrated that they can
provide efficient approximations in certain situations.
Here we pursue two goals. On the one hand, we try to show
commonalities in the analysis of finite element methods and the
analysis of solutions represented by neural networks. On the other
hand, we develop an a posteriori error estimator which allows testing
a once trained network for its accuracy and which can furthermore be
used as a termination criterion during the training process.
Even though the resulting error estimator provides very
high accuracy in practical applications, it is not robust in the sense
of a guaranteed upper bound.

\paragraph{Learning solutions of partial differential equations}
We consider a neural network function  $v_\N$ as a (differentiable) function $v_\N:\Omega\to\mathds{R}^c$, where $\Omega\subset\mathds{R}^d$ is the
computational domain of dimension $d\in\mathds{N}$ and
$c\in\mathds{N}$ is the size of the differential system. By $\N$ we denote the architecture of a neural network, $v_\N$ is then a specific realization within this architecture.
Several different approaches have been presented that train the
network by integrating the differential equation into a loss
function.
Here we focus on the \emph{Deep Ritz} method by E and Yu~\cite{EYu2017}
which aims at minimizing the energy functional and can be applied
to symmetric problems. 
For the Laplace equation, $-\Delta u=f$ in $\Omega$ with $u=g$ on $\partial\Omega$, this means to minimize
$$
E(u_\N)\le 
E(v_\N) = \frac{1}{2} \int_\Omega |\nabla v_\N(x)|^2\dd x - \int_\Omega v_\N(x)\cdot f(x)\dd x
+ \lambda \int_{\partial\Omega} |v_\N(x)-g(x)|^2\dd x,
$$
with a parameter $\lambda>0$, where the integrals are approximated by 
Monte-Carlo integration based on randomly chosen quadrature points within 
the domain $\Omega$ and on the boundary $\partial\Omega$. The optimal network 
solution $u_\N$ is then identified by minimizing the approximated energy 
functional $E(u_\N)\le E(v_\N)$. See Section~\ref{sec:drlap} 
and~\cite{Samaniego2020} for an 
overview and further examples on the energy based approach. 

Another approach, denoted as \emph{DeepXDE} (Lu, Meng, Mao and
Karniadakis~\cite{Lu2021}), \emph{Unified Deep Artificial
Network} \cite{Berg2018} or \emph{Deep Galerkin
Method (DGM)} \cite{Sirignano2018}, see also
\cite{Anitescu2019}, minimizes the strong  
residual of the equation, either as collocation method in 
randomly picked $N^{in}$ points within the domain and $N^{bnd}$ on the boundary, formulated once more for the Laplace problem,
\[
E(u_\N)\le 
E(v_\N) = \frac{1}{N^{in}}\sum_{k=1}^{N^{in}} |- \Delta v_\N(x_k)-f(x_k)|^2 +  \frac{\lambda}{N^{bnd}}\sum_{j=1}^{N^{bnd}} |v_\N(x_j)-g(x_j)|^2. 
\]
This corresponds to a Monte-Carlo integration of the (strong) residual
\[
E(v_\N) = \|- \Delta v_\N -f\|^2_{L^2(\Omega)} + \lambda
\|g-v_\N\|_{L^2(\partial\Omega)}, 
\]
which could be considered as a natural extension of \emph{Deep Ritz}
to non-symmetric and nonlinear problems.

Finally, a third variant, variational physics-informed neural network 
\emph{VPINN} (Kharazmi, Zhang and Karniadakis \cite{KharazmiZhangKarniadakis2019})
is based on the variational formulation 
\[
E(u_\N)\le E(v_\N)= \left( \int_\Omega \nabla v_\N(x)\cdot \nabla\phi_k(x)\dd x - \int_\Omega f(x) \phi_k(x)\dd x\right)^2
\]
and \emph{training data} is generated by choosing specific test
functions $\phi_k$.

The stopping criteria of the training process have not been studied in
detail. Commonly a fixed number of epochs is performed, see
e.g. \cite{EYu2017} or training runs until the mean residual reaches 
a specified threshold,~\cite{Lu2021}. According to the authors
knowledge, it is the first time that an a posteriori error estimator is
utilized as stopping criterion.

\paragraph{Approximation properties and convergence}
The common rationale for the three different approaches discussed above is the 
excellent approximation property of neural networks, in particular,
Pinkus~\cite{Pinkus1999} proved the capability of deep neural networks 
to uniformly 
and simultaneously approximate differential functions and their derivatives. In 
the context of PDEs, Ghring and coworkers~\cite{Ghring2019} 
showed approximation results in Sobolev spaces and also gave convergence rates 
in the number of layers, neurons and weights. In 
particular for high dimensional differential equations, deep neural network 
based approaches promise to be 
superior~\cite{Lu2021,Sirignano2018}. On the other hand, it must be noted that 
the previous approaches, applied to common, 
low-dimensional ($d=1,2,3$) problems, 
cannot compete with established methods in terms of efficiency. While 
algorithms of $O(N)$ complexity exist for finite element or finite 
difference approximations of elliptic problems, the training of the deep neural 
network is a far more challenging computational task.

\paragraph{Learning operators}
All approaches mentioned so far have in common that a neural network
represents the solution of a specific differential equation
problem. If the problem, e.g. the right-hand side, a parameter or the
domain, is changed, a new network must be trained.
This is also the case for classical simulation methods. Here however the 
solution is obtained by solving a linear or nonlinear
  system of equations which can be accomplished with optimal
  efficiency if, for instance,  multigrid methods are
  available. In contrast, neural network based approaches will call
  for a retraining of the network, which corresponds to solving an
  ill-structured optimization problem.
The picture changes if
e.g. parameter-dependent learning is used or if very high-dimensional
problems are investigated~\cite{EYu2017}. 
\emph{DeepONet}~\cite{deeponet} extends the above mentioned ideas and directly 
aims at learning the complete solution operator. This allows
reusing the once trained network for solving multiple problems.

Deep learning techniques can be applied in the context of numerical simulation, 
as an extension of existing CFD codes to increase their efficiency. One can 
generalize existing numerical methods as artificial neural networks with a 
set of trainable parameters. In \cite{Mishra2018} the authors 
recast finite volume schemes as neural networks and train the underlying parameters 
to improve accuracy on coarse grids, for the solution of time-dependent ODEs and PDEs.
This approach was extended to finite element methods
in~\cite{Brevis2020}.
The \emph{Deep Neural Network Multigrid Method
  (DNN-MG)}~\cite{HartmannLessigMargenbergRichter2020,Margenberg_2021}, 
uses deep neural networks to locally enrich classical finite element
multigrid solutions on coarse meshes with additional fine mesh fluctuations.

One contribution that is similar in terms of the techniques is
the use of neural networks to represent dual problems in the context
of the DWR method~\cite{RothSchroederWick2022}.

\paragraph{Outline of the article}
In this contribution, which is based on the early
preprint~\cite{minakowski2021error}, we 
tackle the question of reliability and error control of deep neural  
network approaches. 

After a brief
  introduction in Section~\ref{sec:intro}, we focus on the \emph{Deep Ritz} 
  method, and investigate to what extent classical finite element analysis 
  carries over to neural network approximations by analysing different 
  contributions to the approximation error. 
  Section~\ref{fesetup} is devoted to the numerical analysis of network 
approximations to the Laplace and the Stokes problem. We present a priori error 
  estimates that are complete except for the error stemming from the inexact 
  solution of the optimization problem.

Next, in Section~\ref{sec:error}, we derive a posteriori error bounds for neural
network solutions. This error estimator extends to different PINNs and can 
be used to rate the quality of trained networks. 
A posteriori error estimation is approached within 
the concept of the \emph{dual weighted residual estimator} (DWR) that has been 
introduced by Becker and Rannacher~\cite{BeckerRannacher2001}. What we derive 
is not a rigorous bound, but an efficient computational tool that can be 
used to validate neural network solutions and  serve as an estimate in stopping 
criteria during the network training. For the sake of simplicity, the 
estimator is developed for the \emph{Deep Ritz} method. However, it
  directly extends to different neural network representations of the
  solution.

Later we briefly present network architecture in Section~\ref{sec:setup}.
Section~\ref{sec:num} demonstrates the accuracy of the 
estimator for different applications and shows how the 
estimator can be integrated as a stopping criterion in training. 
After presenting different numerical examples we conclude in 
Section~\ref{sec:conc}. 


\section{Finite element and neural network approximations}\label{fesetup}

To keep the notation simple we focus on the Laplace
problem. Let $\Omega\subset\mathds{R}^d$ be a $d$-dimensional
domain. We find the weak solution $u$ $\in \V:=H^1_0(\Omega)$ to
\begin{equation}\label{lap:strong}
  -\Delta u = f\text{ in }\Omega,\quad u=0\text{ on }\partial\Omega,
\end{equation}
where $f\in L^2(\Omega)$ is the right hand side. By $\V=H^1_0(\Omega)$ we
denote the space of $L^2$-functions with the first weak derivative in
$L^2(\Omega)$ with a vanishing trace on $\partial \Omega$. The solution $u\in 
\V$ is 
characterized by the variational problem
\begin{equation}\label{lap:var}
  (\nabla u,\nabla v) = (f,v)\quad\forall v\in \V,
\end{equation}
where we denote by $(\cdot,\cdot)$ the $L^2$-inner product on
$\Omega$. Further, the solution is also equivalently characterized as the
minimizer of the functional
\begin{equation}\label{lap:energy}
  E(u)\le E(v):=\frac{1}{2}\|\nabla v\|^2 - (f,v)\quad\forall v\in \V,
\end{equation}
where $\|\cdot\|$ is the $L^2$-norm on $\Omega$. 

\subsection{Finite element approximation}\label{setup:fe}

Now, let $\Omega_h$ be a triangulation of $\Omega$ into open
triangular or quadrilateral (in 2d) elements satisfying usual
regularity requirements on the structure and the form of the elements. For an 
element $T\in\Omega_h$ we denote by
$h_T=\operatorname{diam}(T)$ the element size and by
$h = \max_{T\in\Omega_h} h_T$
the maximum mesh size of the discretization which serves as a parameter
for measuring the fineness. 

By $V_h\subset\V$ we denote the finite dimensional (finite
element) subspace of $H^1_0(\Omega)$. Then, let $u_h\in V_h$ be the
approximation to $u\in\V$ given by
\begin{equation}\label{lap:fe}
  (\nabla u_h,\nabla v_h)=(f,v_h)\quad\forall v_h\in V_h.
\end{equation}
The finite element error $u-u_h$ is bounded by the
interpolation error, yielding the standard estimate
\begin{equation}\label{lap:error}
  \|\nabla (u-u_h) \| \le c h^r \|f \|_{H^{r-1}(\Omega)},
\end{equation}
where $r$ is the polynomial degree of the finite element space and
using the notation $H^0(\Omega):=L^2(\Omega)$ in the case of linear
finite elements, $r=1$. Naturally, this estimate requires sufficient
regularity of the right hand side $f\in H^{r-1}(\Omega)$ and also of
the domain boundary, i.e. $\partial\Omega$ must be a convex polygonal
for $r=1$ or locally parametrizable by a $C^{r+1}$-function for $r\ge
1$. 

\subsection{Deep Ritz approximation of the Laplace problem}\label{sec:drlap}

In principle, the \emph{Deep Ritz} method as proposed by E and Yu~\cite{EYu2017} is 
based on minimizing the energy 
functional~\eqref{lap:energy} by representing the unknown solution
by a neural network $u_{\N,\ut}:\mathds{R}^d\to\mathds{R}$ instead of 
a finite element function. We denote by $\N$ the topology of the
neural network and by $\ut\in \mathds{R}^{\#\N}$ the parameters of the
network, where $\#\N$ is the total number of free
parameters. Finally, $u_{\N,\ut}$ is the function that is realized by 
this specific combination of network topology and parameter
choice. Mostly, we will simply use the notation $u_\N$ and skip the
indication of the parameter vector $\ut$ unless it is of relevance in
the given context.

The framework of 
the \emph{Deep Ritz} method requires differentiability of 
the network, i.e. differentiable activation functions. Fig.~\ref{fig:network} 
shows the layout of the deep neural network as chosen by E and 
Yu, but also a simpler feedforward network that can be used.

Having a certain network topology $\N$ in mind, the \emph{neural
network approximation space} $V_\N$ used in the \emph{Deep Ritz} method is
given by
\begin{equation}\label{network:space}
  V_\N:=\{u_{\N,\ut}:\mathds{R}^d\to \mathds{R}\,|\, \ut\in\mathds{R}^{\#\N}\}.
\end{equation}

As long as $\#\N$ is finite and when the activation functions are
differentiable it holds  
$V_\N\subset H^1(\Omega)$. However, for $u_\N\in V_\N$ it will not hold $u_\N=0$ on $\partial\Omega$ in the general case, hence $V_\N\not\subset
\V=H^1_0(\Omega)$\footnote{In~\cite{Berg2018}, the authors discussed a modified
setup of the neural network that indeed strongly satisfies the homogeneous
Dirichlet condition.}.
Further, it is important to note that $\V_\N$ is not a vector space. For 
$v_1,v_2\in \V_\N$ it will usually not hold that $v_1+v_2\in \V_\N$.
We consider the penalized energy functional, compare \cite{EYu2017},
\begin{equation}\label{lapvar:energy}
  E_\lambda(v):= \frac{1}{2}\|\nabla v\|^2 - (f,v) + 
  \frac{\lambda}{2}|v|^2_{\partial\Omega},
\end{equation}
where $\lambda\in\mathds{R}_+$ is a parameter and 
$|\cdot|_{\partial\Omega}$ is the $L^2$-norm on the boundary of the domain. The 
additional penalty term forces $v$ towards zero along the boundary. The 
minimizer of~\eqref{lapvar:energy} denoted by $u_\lambda\in H^1(\Omega)$ is 
characterized by the variational problem
\begin{equation}\label{lapvar:var}
  a_\lambda(u_\lambda,v) = (f,v)\quad\forall v\in H^1(\Omega),\quad
  a_\lambda(u,v):= (\nabla u,\nabla v) + \lambda \langle
  u,v\rangle_{\partial\Omega},
\end{equation}
where $\langle\cdot,\cdot\rangle_{\partial\Omega}$ denotes the $L^2$-inner product on the boundary $\partial\Omega$. 
The weak solution $u_\lambda\in H^1(\Omega)$ solves the Laplace
problem with a disturbed Robin boundary condition, i.e. 
\begin{equation}\label{lapvar:strong}
  -\Delta u_\lambda = f\text{ in }\Omega,\quad u_\lambda +\lambda^{-1}\partial_n u_\lambda = 0\text{ on }\partial\Omega.
\end{equation}
The penalized energy functional hence introduces an additional
modeling error term $\|u-u_\lambda\|$ that will depend on the parameter
$\lambda$ and that will converge to zero for $\lambda\to
\infty$.

Training of the neural network minimizes the modified energy functional~(\ref{lapvar:energy}) 
expressed in terms of Monte Carlo integration. To be precise:
$N^{in}\in\mathds{N}$ inner quadrature points $\xt^{in}=\{x^{in}_1,\dots,x^{in}_{N^{in}}\}\subset \Omega$ and $N^{bnd}\in\mathds{N}$ boundary quadrature points
$\xt^{bnd}=\{x^{bnd}_1,\dots,x^{bnd}_{N^{bnd}}\}\subset\partial\Omega$
are chosen, either randomly or based on a mesh of the domain. The loss function is given by
\begin{multline}\label{lap:lossfunction}
  E_{\lambda,\text{mc}}(u_\N; \mathbf{x}^{in},\mathbf{x}^{bnd}):=\\\frac{|\Omega|}{N^{in}}\sum_{k=1}^{N^{in}}\Big(\frac{1}{2}|\nabla u_\N(x^{in}_k)|^2
  -f(x^{in}_k)\cdot u_\N(x^{in}_k)\Big)+\frac{|\partial\Omega|}{N^{bnd}}
  \sum_{j=1}^{N^{bnd}}\frac{\lambda}{2} |u_\N(x^{bnd}_j)|^2.
\end{multline}
Minimizing~\eqref{lap:lossfunction} identifies the weights $\ut\in
\mathds{R}^{\#\N}$.

\paragraph{Error analysis for the Laplace equation}
  For the following, we denote by $u\in H^1_0(\Omega)$ the exact solution to 
  the Laplace problem and by $u_\N\in V_\N$ the \emph{Deep Ritz} solution that 
  is 
  obtained with a numerical optimization routine and which is based on 
  Monte-Carlo integration of the energy functional. The error of the \emph{Deep 
  Ritz} approximation $(u-u_\N)$ is composed of a multitude of different 
  influences: first, as stated above, the energy functional is based on a 
  perturbed problem, and we denote by $u_\lambda\in H^1(\Omega)$ the exact 
  solution to this perturbed problem. By $(u-u_\lambda)$ we denote the 
  \emph{modeling error}.
  By $ u_{\N,ex}\in V_\N$ we denote the minimum of the energy functional 
  \eqref{lapvar:energy} in the set of neural network functions $V_\N$. 
  The error $(u_\lambda-u_{\N,ex})$ is an \emph{approximation error}. Next, $u_{\N,\text{mc}}\in  V_\N$ is minimum of the  Monte-Carlo approximated 
  energy functional \eqref{lap:lossfunction}. This introduces the  
  \emph{generalization error}
  $(u_{\N,ex}-u_{\N,\text{mc}})$.
  Finally, the \emph{optimization error} 
  $(u_{\N,\text{mc}}-u_\N)$ remains.
  Altogether, four distinct
  contributions can be identified
\[
\|u-u_{\N}\| \le
\|u-u_\lambda\| +
\|u_\lambda-u_{\N,ex}\| +
\|u_{\N,ex} - u_{\N,\text{mc}}\|+
\|u_{\N,\text{mc}}-u_{\N}\|.
\]
Modeling (1st) and network approximation (2nd) error of the Laplace 
equation have been studied in the literature, and also quantitative convergence 
results are available~\cite{MuellerZeinhofer2021,Liao2021}, these however do 
not consider   the numerical quadrature of the energy functional. Some first 
results are also  known for nonlinear problems~\cite{Dondl2021}.
Also, the generalization error (3rd) has been studied, usually from a 
stochastic point of view~\cite{LuLuWang2021}.
We start by estimating the 
model error that depends on the choice of $\lambda>0$, but that is not yet 
related to the discretization of the equation.

\begin{lemma}[Model error]\label{lemma:modelerror}
  Let $f\in L^2(\Omega)$, $\lambda\in\mathds{R}_+$ and
  $\Omega$ be such that the solutions $u\in H^1_0(\Omega)$ and
  $u_\lambda\in H^1(\Omega)$ to 
  \begin{equation}\label{lem:mod:1}
    (\nabla u,\nabla\phi) = (f,\phi),\qquad
    (\nabla u_\lambda,\nabla\phi_\lambda) + \lambda \langle
    u_\lambda,\phi_\lambda\rangle_{\partial\Omega} =
    (f,\phi_\lambda)
  \end{equation}
  for $\phi\in H^1_0(\Omega)$ and $\phi_\lambda\in H^1(\Omega)$ 
  satisfy $\|u\|_{H^2(\Omega)}+\|u_\lambda\|_{H^2(\Omega)}\le c_s \|f\|$.
  It holds
  \[
  \|\nabla (u-u_\lambda) \| \le \frac{c}{\lambda} \|f\|,
  \]
  where $c>0$ depends on the domain $\Omega$ only. 
\end{lemma}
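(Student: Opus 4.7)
The plan is to identify $e := u - u_\lambda$ as a harmonic function whose boundary trace is small of order $\lambda^{-1}$, and then to control $\|\nabla e\|$ by a harmonic-extension estimate. A naive energy test $v = e$ in the combined variational problem only yields the suboptimal rate $O(\lambda^{-1/2})$; to reach the claimed $O(\lambda^{-1})$, the harmonicity of $e$ must be exploited, together with a boundary bound measured in $H^{1/2}(\partial\Omega)$ rather than $L^2(\partial\Omega)$.

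\textbf{Steps.} First, I would subtract the two weak formulations in~\eqref{lem:mod:1} and test against $\phi \in H^1_0(\Omega)$. The penalty contribution $\lambda\langle u_\lambda,\phi\rangle_{\partial\Omega}$ vanishes, leaving $(\nabla e, \nabla\phi) = 0$ for every $\phi \in H^1_0(\Omega)$, so $e$ is weakly harmonic in $\Omega$. Second, since $u|_{\partial\Omega} = 0$, the trace is $e|_{\partial\Omega} = -u_\lambda|_{\partial\Omega}$, and the Robin-type strong form~\eqref{lapvar:strong} rewrites this as $e|_{\partial\Omega} = \lambda^{-1}\partial_n u_\lambda$. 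Third, invoking the posited $H^2$-regularity $\|u_\lambda\|_{H^2(\Omega)} \le c_s\|f\|$ and the standard trace theorem yields
\[
\|e|_{\partial\Omega}\|_{H^{1/2}(\partial\Omega)} \;=\; \lambda^{-1} \|\partial_n u_\lambda\|_{H^{1/2}(\partial\Omega)} \;\le\; \frac{c}{\lambda}\,\|u_\lambda\|_{H^2(\Omega)} \;\le\; \frac{c\,c_s}{\lambda}\,\|f\|.
\]
Finally, for any lifting $w \in H^1(\Omega)$ with $w|_{\partial\Omega} = e|_{\partial\Omega}$, the difference $e - w$ lies in $H^1_0(\Omega)$, and the harmonicity of $e$ gives $\|\nabla e\|^2 = (\nabla e,\nabla w) \le \|\nabla e\|\,\|\nabla w\|$, hence $\|\nabla e\| \le \|\nabla w\|$. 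Choosing $w$ as a bounded right inverse of the trace operator, so that $\|w\|_{H^1(\Omega)} \le c\,\|e|_{\partial\Omega}\|_{H^{1/2}(\partial\Omega)}$, closes the argument.

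\textbf{Main obstacle.} The main technical ingredient is the pair of boundary-regularity statements $\|\partial_n u_\lambda\|_{H^{1/2}(\partial\Omega)} \le c\,\|u_\lambda\|_{H^2(\Omega)}$ and the existence of an $H^1(\Omega)$-bounded lifting of an $H^{1/2}(\partial\Omega)$ trace; both are standard under boundary regularity compatible with the posited $H^2$-hypothesis (e.g., $C^{1,1}$ or convex polygonal). The conceptual hurdle is to notice that the boundary discrepancy is not merely $L^2$-small but already $H^{1/2}$-small, which is precisely the scale at which the harmonic-extension estimate becomes sharp and produces the full $\lambda^{-1}$ rate.
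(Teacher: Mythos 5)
Your proof is correct, but it takes a genuinely different route from the paper. The paper argues by duality: it introduces an adjoint solution $z\in H^1_0(\Omega)\cap H^2(\Omega)$, derives the error identity $\|\nabla(u-u_\lambda)\|=\langle\partial_n z,u_\lambda\rangle_{\partial\Omega}$ (the volume terms cancel by the two variational formulations, exactly the orthogonality you also observe), and then extracts the rate from $|u_\lambda|_{L^2(\partial\Omega)}=\lambda^{-1}|\partial_n u_\lambda|_{L^2(\partial\Omega)}\le c\lambda^{-1}\|u_\lambda\|_{H^2(\Omega)}$ paired with $|\partial_n z|_{L^2(\partial\Omega)}\le c\|z\|_{H^2(\Omega)}$. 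You instead dispense with the adjoint problem entirely: harmonicity of $e=u-u_\lambda$ plus the energy-minimizing property of the harmonic extension reduces everything to the $H^{1/2}(\partial\Omega)$-norm of the trace, and the Robin condition supplies the factor $\lambda^{-1}$ in the same way. The trade-off is where the regularity is spent: the paper needs $H^2$-regularity of an auxiliary adjoint solution but only $L^2(\partial\Omega)$ control of normal derivatives, while you need the sharper trace estimate $\|\partial_n u_\lambda\|_{H^{1/2}(\partial\Omega)}\le c\|u_\lambda\|_{H^2(\Omega)}$ together with a bounded right inverse of the trace operator. That estimate is standard on $C^{1,1}$ domains but is delicate on convex polygons (the normal vector jumps at corners, so $\partial_n u_\lambda$ need not lie in $H^{1/2}$ of the whole boundary even when each Cartesian derivative does); since the paper explicitly wants to cover the convex polygonal case, this is the one hypothesis you should flag or localize edge-by-edge. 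Your observation that the naive energy test only yields $O(\lambda^{-1/2})$ and that the full rate requires exploiting orthogonality of the error is exactly the right diagnosis, and it is the same mechanism the paper's duality argument encodes.
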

\begin{proof}
  Let $z\in H^1_0(\Omega)$ be the solution to the adjoint problem
  \begin{equation}\label{n:adjoint}
    -\Delta z = \frac{\nabla (u-u_\lambda)}{\|\nabla
      (u-u_\lambda)\|}\text{ in }\Omega,\quad
    z=0\text{ on }\partial\Omega.
  \end{equation}
  Since the right hand side of this problem is in $L^2(\Omega)$, it
  holds $z\in H^2(\Omega)$ and $\|z\|_{H^2(\Omega)}\le c_s$ 
  (given that the domain's boundary is sufficiently smooth or convex
  polygonal).   
  Multiplication of~(\ref{n:adjoint}) with the error $u-u_\lambda$ and
  integration over the domain give the error identity
  \[
  \|\nabla (u-u_\lambda)\| = (\nabla z,\nabla (u-u_\lambda))
  - \langle \partial_n z,u-u_\lambda\rangle_{\partial\Omega}. 
  \]
  As $u=0$ and $z=0$ on $\partial\Omega$ this, together
  with~\eqref{lem:mod:1}  gives 
  \[
  \|\nabla (u-u_\lambda) \| = (\nabla (u-u_\lambda),\nabla
    z)+\lambda\langle u-u_\lambda,z\rangle_{\partial\Omega}+ \langle
    \partial_n z,u_\lambda\rangle_{\partial\Omega}
    =\langle \partial_n z,u_\lambda\rangle_{\partial\Omega}. 
  \]
  Finally, with~(\ref{lapvar:strong}) and using the trace inequality
  and the regularity of  adjoint and primal solution we obtain
  \[
  \|\nabla (u-u_\lambda) \|
  \le | \partial_n z|_{L^2(\partial\Omega)}
  | u_\lambda|_{L^2(\partial\Omega)}
  \le \frac{c}{\lambda} \|z\|_{H^2} \|u_\lambda\|_{H^2(\Omega)}
  \le \frac{c}{\lambda}\|f\|.
  \]
\end{proof}

To study the \emph{approximation error} $u_\lambda - u_{\N,ex}$, where  $u_\lambda\in H^1(\Omega)$ is the minimizer to $E_\lambda(\cdot)$ in the Hilbert space $H^1(\Omega)$ and $u_{\N,ex}\in V_\N$ is the minimizer to $E_\lambda(\cdot)$ in the neural network set, we use a generalized version of Cea's leamma taken from~\cite[Prop. 3.1]{MuellerZeinhofer2021}. It holds
\begin{equation}\label{cea}
  \|u_\lambda - v_\N\|_\lambda^2 \le
  2 \big( E_\lambda(v_\N)-\inf_{\tilde v\in V_\N} E_\lambda(\tilde v) \big)
  + \inf_{w_\N\in V_\N}\| u_\lambda - w_\N\|_\lambda^2\quad \forall v_\N\in V_\N,
\end{equation}
where the norm $\|\cdot\|_\lambda$ is defined via the bilinear form~\eqref{lapvar:var}
\[
\|u\|_\lambda := a_\lambda(u,u)^\frac{1}{2}=\Big( \|\nabla u\|^2 + \lambda 
\|u\|_{\partial\Omega}^2\Big)^\frac{1}{2}.
\]
If we choose $v_\N=u_{\N,ex}\in V_\N$, the exact minimum to $E_\lambda(\cdot)$ in the neural network set, it holds
\[
\|u_\lambda - u_{\N,ex}\|_\lambda \le
\inf_{w_\N\in V_\N}
\| u_\lambda - w_\N\|_\lambda\quad \forall w_\N\in V_\N,
\]
and the error $u_\lambda-u_{\N,ex}$ is indeed a neural network approximation 
error that has already been extensively studied in 
the literature~\cite{Barron1993a,Ghring2019,MuellerZeinhofer2021}. Quantitative convergence results 
$\|u_\lambda-u_{\N,ex}\|\to 0$ for increasing network sizes are well 
known, see 
for example~\cite[Theorem 4.1]{Ghring2019}, where the authors state that the 
bound 
\begin{equation}\label{approx1:1}
  \|\nabla (u_\lambda- u_{\N,ex})\| = {\cal O}(\epsilon)
\end{equation}
is obtainable, given $u_\lambda\in H^2(\Omega)$, with a neural network
consisting of $L={\cal O}(\log(\epsilon^{-2}))$ layers and
$N={\cal O}(\epsilon^{-2}\log(\epsilon^{-2}))$ weights and neurons. 
This approximately corresponds to\footnote{The exact relation is $\epsilon=\sqrt{W_0(N)}/\sqrt{N}$, where $W_0(x)$ is the Lambert \emph{W}-function, the inverse of $f(y)=ye^y$. It holds $W(N)\le\log(N)$.}
\begin{equation}\label{approx1:2}
  \|\nabla (u_\lambda- u_{\N,ex})\| = {\cal O}\big(
  \frac{\sqrt{\log(N)}}{\sqrt{N}}\big) 
\end{equation}
which, in terms of the number of unknowns $N$, is comparable to the
number of unknowns in linear finite element approximation.

In Lemma~\ref{lemma:gen:laplace}, we will give a unified estimate for this approximation error and the generalization error, i.e. we will estimate $u_\lambda - u_{\N,\text{mc}}$ at once.

The choice of the numerical quadrature points gives rise to the
\emph{generalization error} of the neural network representation. In the
context of \emph{Deep Ritz}, the generalization error is the error of
numerical quadrature, i.e. the error $u_{\N,ex}-u_{\N,\text{mc}}$ where $u_{\N,\text{mc}}\in V_\N$ is the minimum 
in the neural network set $V_\N$ based on Monte-Carlo 
integration of the energy functional.
To estimate this error, one has to quantify the stability
of the minimizer with respect to the quadrature of the energy
functional. In~\cite{Mishra2022} the authors base the analysis on stability
estimates of the underlying partial differential equations. The
authors of~\cite{HongSiegelXu2021} give bounds on the Rademacher
complexity of the energy functions and therefore limit the generalization
error.

We will analyze the generalization error based on the best approximation 
estimate~\eqref{cea} taken from~\cite[Prop. 3.1]{MuellerZeinhofer2021} and on 
estimating the quadrature error between $E_\lambda(\cdot)$ and 
$E_{\lambda,\text{mc}}(\cdot)$. 
First, we  cite a standard result on the Monte Carlo integration error,
taken from~\cite[Theorem 2.1]{Caflisch1998}
\begin{lemma}[Monte Carlo Quadrature]\label{lemma:montecarlo}
  Let $\Omega\subset\mathds{R}^d$ be a bounded domain. 
  For large $N$, let $x_1,\dots,x_N\in \Omega$ be a set of Monte
  Carlo quadrature nodes. For $f\in C(\Omega)$ it holds
  \[
  \Big|\int_\Omega f(x)\,\text{d}x - \frac{|\Omega|}{N}\sum_{i=1}^N
  f(x_i)
  \Big| = \|f\|_{L^2(\Omega)}{\cal O}(N^{-\frac{1}{2}}\nu),
  \]
  where $\nu$ is a standard normal random variable.  
\end{lemma}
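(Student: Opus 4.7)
The plan is to recognize this as a direct consequence of the Central Limit Theorem applied to i.i.d.\ samples from the uniform distribution on $\Omega$. I would treat the Monte Carlo nodes $x_1,\dots,x_N$ as realizations of i.i.d.\ random variables $X_1,\dots,X_N$ distributed uniformly on $\Omega$, so that their density is $|\Omega|^{-1}\mathds{1}_\Omega$, and then introduce the auxiliary random variables $Y_i := |\Omega|\,f(X_i)$.

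First I would compute the first two moments of $Y_i$. By the definition of the uniform distribution,
\[
\mathbb{E}[Y_i] = |\Omega|\cdot\frac{1}{|\Omega|}\int_\Omega f(x)\dd x = \int_\Omega f(x)\dd x,
\]
and the second moment satisfies $\mathbb{E}[Y_i^2] = |\Omega|\,\|f\|_{L^2(\Omega)}^2$, hence
\[
\sigma^2 := \operatorname{Var}(Y_i) = |\Omega|\,\|f\|_{L^2(\Omega)}^2 - \Big(\int_\Omega f\dd x\Big)^2 \le |\Omega|\,\|f\|_{L^2(\Omega)}^2.
\]
The continuity of $f$ on the bounded domain $\Omega$ ensures $f\in L^2(\Omega)$, so $\sigma$ is finite and the CLT applies.

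Next I would rewrite the quadrature error as the centered sample mean:
\[
\int_\Omega f(x)\dd x - \frac{|\Omega|}{N}\sum_{i=1}^N f(X_i)
= -\frac{1}{N}\sum_{i=1}^N\bigl(Y_i-\mathbb{E}[Y_i]\bigr).
\]
The classical Central Limit Theorem then yields that $\sqrt{N}$ times this quantity converges in distribution to $\sigma\nu$, where $\nu\sim\mathcal{N}(0,1)$. Plugging in the bound on $\sigma$ and absorbing the constant $\sqrt{|\Omega|}$ into the $\mathcal{O}$ symbol produces the claimed estimate
\[
\Big|\int_\Omega f(x)\dd x - \frac{|\Omega|}{N}\sum_{i=1}^N f(x_i)\Big|
= \|f\|_{L^2(\Omega)}\,\mathcal{O}(N^{-1/2}\nu).
\]

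The only subtle point is the interpretation of the $\mathcal{O}(N^{-1/2}\nu)$ notation: the CLT is an asymptotic statement about the distribution of the rescaled error rather than a deterministic bound, which is why the statement is qualified by \emph{for large $N$}. Since this appears as a citation from \cite{Caflisch1998}, a reference to that source is sufficient; no deeper analysis of the tails is required here.
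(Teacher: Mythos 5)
Your argument is correct and is exactly the standard Central Limit Theorem derivation that underlies the cited result: the paper itself gives no proof of this lemma but simply quotes it from Caflisch (Theorem 2.1), so there is nothing in the text to diverge from. Your computation $\sigma^2=\operatorname{Var}(Y_i)\le |\Omega|\,\|f\|_{L^2(\Omega)}^2$ also explains the paper's own remark immediately after the lemma, namely that the sharper version of the estimate involves the variance of $f$ rather than its $L^2$-norm, with the $L^2$-bound being the convenient relaxation used here.
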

In a more precise version, the integration error depends on the
variance of $f$  and not on the $L^2$-norm of $f$ itself, this
simplified version however is sufficient for our purposes.

\begin{lemma}[Generalization and approximation error of the Deep Ritz method for the Laplace problem]
  \label{lemma:gen:laplace}
  Let $\Omega\subset\mathds{R}^d$ be a bounded domain, $f\in L^2(\Omega)$ and $V_\N\subset C^1(\Omega)\cap C(\bar\Omega)$ be a neural network set. Let $u_\lambda\in H^1(\Omega)$ be the solution to~\eqref{lapvar:var} and $u_{\N,\text{mc}}\in V_\N$ be the neural network minimizer to $E_{\lambda,\text{mc}}(\cdot)$.
  Further, let the network satisfy
  \begin{equation}\label{approx:mc}
    \inf_{v_\N\in V_\N}
    \|u_\lambda - v_\N\|_\lambda \le \epsilon_\N
  \end{equation}
  for a tolerance $\epsilon_\N>0$. Then, for $N:=\min\{N^{in},N^{bnd}\}$ it holds 
  \[
  \|\nabla (u_\lambda - u_{\N,\text{mc}})\|^2+
  \lambda\|\nabla (u_\lambda - u_{\N,\text{mc}})\|_{\partial\Omega}^2
  \le
  C \big( \epsilon_N^2 + {\cal O}(N^{-\frac{1}{2}}\nu) \big)
  \]
  where $\nu$ is a standard normal random variable and where $C>0$ depends on 
  the domain $\Omega$, $\|f\|_{L^2}$, $\|u_{\N,\text{mc}}\|_{C^1(\Omega)\cap 
  C(\bar\Omega)}$ and on $\|u_{\N,ex}\|_{C^1(\Omega)\cap C(\bar\Omega)}$.
\end{lemma}
\begin{proof}
  With the best approximation estimate~\eqref{cea} we get
  \begin{equation}\label{gen:1}
    \|u_\lambda - u_{\N,\text{mc}}\|_\lambda^2 \le
    2 \big( E_\lambda(u_{\N,\text{mc}})
    -\inf_{\tilde v_\N\in V_\N} E_\lambda(\tilde v_\N) \big)
    + \inf_{w_\N\in V_\N}\| u_\lambda - w_\N\|_\lambda^2.
  \end{equation}
  Here, $u_{\N,\text{mc}}$ is not the minimizer of $E_\lambda(\cdot)$ in $V_\N$, which we denote by $u_{\N,ex}\in V_\N$,  but the minimizer of $E_{\lambda,\text{mc}}(\cdot)$. Hence, we extend the first term as
  \begin{multline}\label{gen:2}
    E_\lambda(u_{\N,\text{mc}})-\inf_{\tilde v_\N\in V_\N} E_\lambda(\tilde v_\N)
    =:E_\lambda(u_{\N,\text{mc}})- E_\lambda(u_{\N,\text{ex}})\\
    =
    \big(E_\lambda(u_{\N,\text{mc}})-
    E_{\lambda,\text{mc}}(u_{\N,\text{mc}})\big)
    +\big(E_{\lambda,\text{mc}}(u_{\N,\text{mc}})
    -E_{\lambda,\text{mc}}(u_{\N,ex})\big)\\
    +\big(E_{\lambda,\text{mc}}(u_{\N,ex})-  E_\lambda(u_{\N,ex})\big).
  \end{multline}
  The first and the third terms are quadrature errors, and they can be 
  estimated with Lemma~\ref{lemma:montecarlo}, by using $V_\N\subset 
  C^1(\Omega)\cap 
  C(\bar\Omega)$ as well as Young's inequality
  \begin{multline*}
    \big|E_\lambda(u_{\N,\text{mc}})-
    E_{\N,\text{mc}}(u_{\N,\text{mc}})\big| \\
    =
    \Big(
    \|\nabla u_{\N,\text{mc}}\|^2_{L^4(\Omega)}
    + \|f\|_{L^2(\Omega)} \|u_{\N,\text{mc}}\|_{L^\infty(\Omega)}
    +\lambda \|u_{\N,\text{mc}}\|_{L^4(\partial\Omega)}^2
    \Big){\cal O}(N^{-\frac{1}{2}}\nu)\\
    \le
    C \Big( \|u_{\N,\text{mc}}\|^2_{W^{1,\infty}(\Omega)}
    + \lambda \|u_{\N,\text{mc}}\|^2_{L^\infty(\partial\Omega)}
    + \|f\|^2_{L^2(\Omega)}
    \Big) {\cal O}(N^{-\frac{1}{2}}\nu).
  \end{multline*}
  The same argument can be applied to estimate the last term in~\eqref{gen:2}. The second term of~\eqref{gen:2} is negative as
  \[
  E_{\lambda,\text{mc}}(u_{\N,\text{mc}}) = \inf_{v_\N\in V_\N}
  E_{\lambda,\text{mc}}(v_{\N}) \le E_{\lambda,\text{mc}}(u_{\N,ex})
  \]
  and therefore it can be neglected. The last term in~\eqref{gen:1} is the approximation error and given by~\ref{approx:mc}.
\end{proof}

Finally, the optimization error $(u_{\N,\text{mc}}-u_\N)$ remains for which there 
is no a priori error bound.

\subsection{Deep Ritz approximation of the Stokes equations}\label{intro:stokes}

As a second example we consider the Stokes equation on a two
dimensional domain $\Omega\subset\mathds{R}^2$, i.e. we find the velocity
$\vt\in \V^2_0:=H^1_0(\Omega)\times H^1_0(\Omega)$ and the pressure $p\in \L:=
L^2(\Omega)\setminus\mathds{R}$ such that
\begin{equation}\label{stokes:strong}
  \div\,\vt=0,\quad -\Delta \vt + \nabla p = \ft\text{ in }\Omega,\quad
  \vt=0\text{ on }\partial\Omega, 
\end{equation}
where we denote by $\ft\in \L^2:=L^2(\Omega)\times L^2(\Omega)$ the right hand
side. Considering a discrete pair of subspaces $V_h\times L_h\subset
\V\times \L$, the finite element solution is defined by
\begin{equation}\label{stokes:fe}
  (\div\,\vt_h,\xi_h)+(\nabla \vt_h,\nabla\phi_h)-(p_h,\nabla\cdot
  \phi_h)
  =(\ft_h,\phi_h)\quad\forall (\phi_h,\xi_h)\in V_h\times L_h. 
\end{equation}
Assuming inf-sup stability of the discrete finite element pair, the
solution exists uniquely, and standard best approximation results are satisfied,
e.g. for the $P^2-P^1$ Taylor-Hood element it holds
\begin{equation}\label{stokes:error:th}
  \|\nabla (\vt-\vt_h)\| + \|p-p_h\| \le c h^2 \|\ft\|_{H^1(\Omega)},
\end{equation}
or, for equal-order linear finite elements for the pressure and the velocity,
the solution to the stabilized formulation
\begin{equation}\label{stokes:festab}
  (\div\,\vt_h,\xi_h)+(\nabla \vt_h,\nabla\phi_h)-(p_h,\nabla\cdot
  \phi_h) + h^2 (\nabla p_h,\nabla\xi_h)
  =(\ft_h,\phi_h)\quad\forall (\phi_h,\xi_h)\in V_h\times L_h
\end{equation}
satisfies the estimate
\begin{equation}\label{stokes:error:eo}
  \|\nabla (\vt-\vt_h)\| + \|p-p_h\| \le c h \|\ft\|.
\end{equation}
We refer to the literature, e.g. the monograph of John~\cite{John2016} for
these and further aspects on the finite element approximations to the
Stokes equations. 

Having a saddle-point structure the Stokes system is not directly
associated to an energy form. Instead we realize the \emph{Deep Ritz} method
by introducing a penalty term to enforce the divergence free condition,
i.e.
\begin{equation}\label{energy:stokes}
  E_{\lambda,\alpha}(\vt):= \frac{1}{2} \|\nabla \vt\|^2 - (\ft,\vt) +
  \frac{\alpha}{2} \|\div\,\vt\|^2 + \frac{\lambda}{2}|\vt|^2_{\partial\Omega},
\end{equation}
where $\alpha,\lambda>0$ are two parameters controlling the balance
between minimizing the energy and satisfying the divergence constraint
and the boundary values. The solution is characterized by the
variational problem $\vt_{\lambda,\alpha}\in \V^2:=H^1(\Omega)\times H^1(\Omega)$
\begin{equation}\label{energy:stokes:variational}
  \begin{aligned}
    A_{\lambda,\alpha}(\vt_{\lambda,\alpha},\phi)&=F(\phi)\quad\forall \phi\in H^1(\Omega),\\
    A_{\lambda,\alpha}(\vt,\phi)&:=
    (\nabla\vt,\nabla\phi) + \alpha
    (\div\,\vt,\div\,\phi)+\lambda \langle
    \vt,\phi\rangle_{\partial\Omega},\\
    F(\phi) &:= (\ft,\phi). 
  \end{aligned}
\end{equation}
This variational problem corresponds to the following
classical formulation which also reveals a disturbed boundary
condition
\begin{equation}\label{stokes:boundary}
  -\Delta \vt_{\lambda,\alpha} - \alpha
  \nabla\div\,\vt_{\lambda,\alpha} = \ft\text{ in }\Omega,\quad
  \lambda\vt_{\lambda,\alpha} + \alpha \nt\div\,\vt_{\lambda,\alpha} - 
  \partial_n\vt_{\lambda,\alpha} = 0 \text{ on 
  }\partial\Omega. 
\end{equation}
Hereby and similar to Lemma~\ref{lemma:modelerror} we get
\begin{lemma}[Stokes model error]\label{lemma:stokes}
  Let $\ft\in \L^2$, $\lambda,\alpha\in\mathds{R}$ with
  $\lambda,\alpha>0$ and  $\Omega$ be such that the solutions
  $(\vt,p)\in \V^2_0\times \L$ and $\vt_{\lambda,\alpha}\in \V^2$
  to~(\ref{stokes:strong}) and~(\ref{energy:stokes:variational}),
  respectively,  satisfy
  $\|\vt\|_{H^2(\Omega)}+\|p\|_{H^1(\Omega)}\le c_s \|\ft\|$ and 
  $\|\vt_{\lambda,\alpha}\|_{H^2(\Omega)}\le c_s \|\ft\|$. 
  It holds
  \[
  \|\nabla (\vt-\vt_{\lambda,\alpha}) \| \le
  \frac{c}{\min\{\sqrt{\lambda},\sqrt{\alpha}\}}
  \|\ft\|, 
  \]
  where $c>0$ depends on the domain $\Omega$ only. 
\end{lemma}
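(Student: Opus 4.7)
The plan is to mirror the duality argument from Lemma~\ref{lemma:modelerror}, adapted to the saddle-point structure of Stokes, with the two penalty parameters absorbed through the a priori control they give on the boundary trace and on the divergence of $\vt_{\lambda,\alpha}$. Throughout set $\et:=\vt-\vt_{\lambda,\alpha}$, and note that by~\eqref{stokes:strong} we have $\et=-\vt_{\lambda,\alpha}$ on $\partial\Omega$ and $\div\,\et=-\div\,\vt_{\lambda,\alpha}$ in $\Omega$.

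First I would introduce an adjoint homogeneous Stokes problem: find $(\zt,q)\in\V^2_0\times\L$ with $\div\,\zt=0$ in $\Omega$, $\zt=0$ on $\partial\Omega$ and $-\Delta\zt+\nabla q=\mathbf{g}$ in $\Omega$, where $\mathbf{g}$ is chosen in the same spirit as in Lemma~\ref{lemma:modelerror} so that the pairing with $\et$ reproduces $\|\nabla\et\|$. Standard Stokes regularity under the stated assumptions on $\Omega$ yields $\|\zt\|_{H^2(\Omega)}+\|q\|_{H^1(\Omega)}\le c$. The key algebraic observation is a Galerkin-type orthogonality: using the divergence-free, boundary-vanishing $\zt$ as a test function in the weak form of~\eqref{stokes:strong} annihilates the pressure term, while using it in~\eqref{energy:stokes:variational} annihilates both the divergence penalty $\alpha(\div\,\vt_{\lambda,\alpha},\div\,\zt)$ and the boundary penalty $\lambda\langle\vt_{\lambda,\alpha},\zt\rangle_{\partial\Omega}$; subtracting yields $(\nabla\et,\nabla\zt)=0$.

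Next, multiplying the adjoint PDE by $\et$ and integrating by parts, while carefully tracking the nonvanishing boundary trace of $\et$ and the nonzero divergence $\div\,\et$, and invoking the orthogonality above, the bulk terms collapse and the error identity
\[
\|\nabla\et\| = \langle \partial_n\zt - q\,\nt,\,\vt_{\lambda,\alpha}\rangle_{\partial\Omega} + (q,\div\,\vt_{\lambda,\alpha})
\]
emerges. The two terms on the right are controlled by the natural energy estimate obtained by testing~\eqref{energy:stokes:variational} with $\phi=\vt_{\lambda,\alpha}$ and absorbing $(\ft,\vt_{\lambda,\alpha})$ through a Poincaré–trace inequality, which gives
\[
\|\nabla\vt_{\lambda,\alpha}\|^2 + \alpha\|\div\,\vt_{\lambda,\alpha}\|^2 + \lambda|\vt_{\lambda,\alpha}|^2_{\partial\Omega}\le c\|\ft\|^2,
\]
hence $\|\div\,\vt_{\lambda,\alpha}\|\le c\alpha^{-1/2}\|\ft\|$ and $|\vt_{\lambda,\alpha}|_{\partial\Omega}\le c\lambda^{-1/2}\|\ft\|$. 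Together with $\|\partial_n\zt-q\,\nt\|_{\partial\Omega}\le c(\|\zt\|_{H^2}+\|q\|_{H^1})\le c$, obtained from the adjoint regularity and the trace theorem, this yields $\|\nabla\et\|\le c(\lambda^{-1/2}+\alpha^{-1/2})\|\ft\|$, which is equivalent to the asserted $\min\{\sqrt{\lambda},\sqrt{\alpha}\}^{-1}$ rate.

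The main obstacle is designing the adjoint problem so that the divergence constraint and the vanishing trace of $\zt$ simultaneously eliminate the pressure contribution of the unperturbed problem and both penalty contributions of the perturbed one, leaving only boundary and divergence quantities of $\vt_{\lambda,\alpha}$ that are precisely what the $\lambda$- and $\alpha$-weighted energy estimates control. Note that the rate here is $\lambda^{-1/2}$ rather than $\lambda^{-1}$ as in the scalar Laplace case of Lemma~\ref{lemma:modelerror}, because the Robin-type boundary condition~\eqref{stokes:boundary} couples $\vt_{\lambda,\alpha}|_{\partial\Omega}$ to both $\partial_n\vt_{\lambda,\alpha}$ and $\alpha\,\nt\div\,\vt_{\lambda,\alpha}$, so no further gain beyond the crude energy bound on the boundary trace is available without extra regularity.
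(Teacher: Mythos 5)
Your proposal follows the paper's proof essentially step for step: the same divergence-free, homogeneous-Dirichlet adjoint Stokes problem, the same error identity that reduces everything to the boundary trace and the divergence of $\vt_{\lambda,\alpha}$, and the same diagonal (energy) testing of~\eqref{energy:stokes:variational} to extract the $\lambda^{-1/2}$ and $\alpha^{-1/2}$ factors. The only differences are presentational — you isolate the Galerkin-type orthogonality $(\nabla(\vt-\vt_{\lambda,\alpha}),\nabla\zt)=0$ as a separate step, which the paper folds into a single error identity with the vanishing terms underbraced.
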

\begin{proof}
  Due to its similarity to Lemma~\ref{lemma:modelerror} we just
  give a sketch of the proof. Considering the adjoint $(\zt,p)\in \V^2\times \L$, which is the solution to
  \[
  \frac{\nabla (\vt-\vt_{\lambda,\alpha})}{\|\nabla
    (\vt-\vt_{\lambda,\alpha})\|}
  =-\Delta \zt - \nabla q,\quad \div\,\zt=0,
  \]
  we obtain the error estimate
  \begin{multline}
    \|\nabla (\vt-\vt_{\lambda,\alpha})\| = \Big|
    (\nabla \zt,\nabla (\vt-\vt_{\lambda,\alpha})) +
    (q,\div\,(\vt-\vt_{\lambda,\alpha}))
    +\langle \partial_n
    \zt+q\nt,\vt_{\lambda,\alpha}\rangle_{\partial\Omega}\\
    \underbrace{-\alpha
      (\div\,\vt_{\lambda,\alpha},\div\,\zt)-\lambda\langle\vt_{\lambda,\alpha},\zt\rangle_{\partial\Omega}
      -(p,\div\,\zt)
    }_{=0}\Big|\\
    =\big|
    \langle \partial_n
    \zt+q\nt,\vt_{\lambda,\alpha}\rangle_{\partial\Omega}
    -(\div\,\vt_{\lambda,\alpha},q)\big|\\
    \le \big( \|\zt\|_{H^2(\Omega)} + \|q\|_{H^1(\Omega)}\big)\cdot \big( \|\div\,\vt_{\lambda,\alpha} \| + |\vt_{\lambda,\alpha}|_{\partial\Omega}\big). 
  \end{multline}
  On the other hand, diagonal testing of~(\ref{energy:stokes:variational}) gives
  \[
  \frac{1}{2}\|\nabla\vt_{\lambda,\alpha}\|^2 + \lambda |\vt_{\lambda,\alpha}|_{\partial\Omega}^2 + \alpha \|\div\,\vt_{\lambda,\alpha}\|^2 \le \frac{1}{2} \|\ft\|^2
  \]
  and hereby, we obtain the postulated result.
\end{proof}
For optimal scaling the two parameters $\alpha$ and $\lambda$
should be chosen similarly. This penalized energy minimization
formulation does not produce an approximation to the
pressure.

Having these first results at hand we can proceed as in the case of the 
Laplace problem and define $\vt_{\N,ex}\in W_\N=V_\N\times V_\N$ as the neural 
network solution based on exact integration. The numerical neural network 
solution is obtained by Monte-Carlo quadrature of the energy 
$E_{\lambda,\alpha}(\vt)$, see~\eqref{energy:stokes}, using $N^{in}$ interior 
and $N^{bnd}$ boundary points
\begin{multline}\label{loss:stokes}
  E_{\lambda,\alpha,\text{mc}}(\vt)
  :=
  \frac{|\Omega|}{N^{in}} \sum_{k=1}^{N^{in}}\Big\{
  \frac{1}{2}|\nabla\vt(x_k^{in})|^2
  +\frac{\alpha}{2} |\div\,\vt(x_k^{in})|^2
  - \ft(x_k^{in})\cdot \vt(x_k^{in})\Big\}\\
  +
  \frac{|\Omega|}{N^{bnd}} \sum_{j=1}^{N^{bnd}}
  \frac{\lambda}{2} |\vt(x_j^{bnd})|^2.
\end{multline}
The structure is comparable to the Laplace problem, see~\eqref{lap:lossfunction} just with the additional penalty term enforcing the divergence condition.

Similar to Lemma~\ref{lemma:gen:laplace} we then can estimate the generalization and approximation error of the Stokes problem.
\begin{lemma}[Generalization and approximation error of Deep Ritz (Stokes)]
    Let $\Omega\subset\mathds{R}^2$ be a bounded domain, $\ft\in L^2(\Omega)^2$ and $\Vt_\N\subset C^1(\Omega)^2\cap C(\bar\Omega)^2$ be a neural network set. Let $\vt_{\lambda,\alpha}\in H^1(\Omega)^2$ be the solution to~\eqref{energy:stokes} and~\eqref{energy:stokes:variational} and $\vt_{\N,\text{mc}}\in \Vt_\N$ be the neural network minimizer to $E_{\lambda,alpha,\text{mc}}(\cdot)$ given by~\eqref{loss:stokes}.
  Further, let the network be such that it holds
  \begin{equation}\label{approx:mc:stokes}
    \inf_{\vt_\N\in V_\N}
    \|\vt_{\lambda,\alpha} - \vt_\N\|_{\lambda,\alpha} \le \epsilon_\N,
  \end{equation}
  for a tolerance $\epsilon_\N>0$,
  where
  \[
  \|\vt\|_{\lambda,\alpha}^2:=
  \|\nabla \vt\|^2
  +\alpha \|\div \vt\|^2+
  \lambda\|\vt\|_{\partial\Omega}^2.
  \]
  Then, for $N:=\min\{N^{in},N^{bnd}\}$ it holds 
  \[
  \|\vt_{\lambda,\alpha} - \vt_{\N,\text{mc}}\|^2_{\lambda,\alpha}  \le
  C \big( \epsilon_N^2 + {\cal O}(N^{-\frac{1}{2}}\nu) \big)
  \]
  where $\nu$ is a standard normal random variable and where $C>0$ depends on the domain $\Omega$, of $\|\ft\|_{L^2}$, $\|\vt_{\N,\text{mc}}\|_{C^1(\Omega)^2\cap C(\bar\Omega)^2}$ and on $\|\vt_{\N,ex}\|_{C^1(\Omega)^2\cap C(\bar\Omega)^2}$.
\end{lemma}
The proof follows that of Lemma~\ref{lemma:gen:laplace} line by line, just taking into account the additional term $\alpha(\div\,\vt,\div\,\phi)$.

\section{A posteriori error estimation for neural network solutions}
\label{sec:error}

In the following we will derive an a posteriori error estimator for
estimating the complete error $(u-u_{\N})$ and $(\vt-\vt_{\N})$ for the Laplace 
and the Stokes problem, respectively, that includes all the
different error contributions discussed above: the \emph{model error}, the
\emph{approximation error}, the \emph{generalization error} and also the \emph{training
error}. This estimator is \emph{goal oriented}: instead of estimating the 
error in a norm $\|u-u_{\N}\|$, we estimate scalar quantities of interest 
$J:(u-u_\N)\mapsto \mathds{R}$. Examples of such error functionals are the 
point-wise error of the solution in a certain point $x_a\in\Omega$
\[
J_a(u-u_{\N}) = u(x_a)-u_\N(x_a),
\]
averages of the solution or boundary integrals on $\Gamma\subset\partial\Omega$
\[
J_\Omega(u-u_\N) = \int_\Omega u(x)-u_{\N}(x)\,\text{d}x,\quad
J_{\Gamma}(u-u_\N) = \int_\Gamma \partial_n
u(x)-\partial_nu_{\N}(x)\,\text{d}s.
\]

\subsection{The dual weighted residual method (DWR)}\label{sec:dwr}

We start by giving a concise description of the dual weighted
residual method, such as presented
in~\cite{BeckerRannacher1995,BeckerRannacher2001} for the most simple
case of the Laplace problem $-\Delta u=f$ with homogeneous Dirichlet
data $u=0$. In the general case, the dual weighted residual
  method can be applied to all problems given in a Galerkin
  formulation based on a bilinear or semilinear form. Applications
  include problems in fluid
  dynamics~\cite{BeckerRannacher1995,BeckerRannacher2001} and the
  approach has been extended to optimization and
  parameters identification~\cite{BeckerVexler2004}, to plasticity~\cite{RannacherSuttmeier1998} to coupled multiphysics
  problems~\cite{Richter2017}, among many other applications. The estimator is 
  further applicable to the estimation of time-stepping 
  errors~\cite{BesierRannacher2012,MeidnerRichter2015}.

As introduced above, $\V=H^1_0(\Omega)$ and $V_h\subset \V$ is a discrete subspace. 
Now, let $J:\V\to\mathds{R}$ be a linear functional (in the
  general case, the dual weighted residual estimator also handles
  nonlinear quantities of interest, see~\cite{BeckerRannacher2001}) and let $z\in \V$ be the solution to the adjoint problem
\begin{equation}\label{adjoint:strong}
  -\Delta z = J\text{ in }\Omega,\quad z=0\text{ on }\partial\Omega,
\end{equation}
which, in variational formulation, is given as
\begin{equation}\label{adjoint:var}
  z\in \V\quad (\nabla v,\nabla z) = J(v)\quad\forall v\in \V.
\end{equation}
This already gives the \emph{primal error identity}
\begin{equation}\label{ei:p}
  J(u-u_h) = (\nabla (u-u_h),\nabla z) = (f,z)- (\nabla u_h,\nabla z).
\end{equation}
By Galerkin orthogonality $(\nabla(u-u_h),\nabla v_h)=0$ for all $v_h\in
V_h$ the corresponding \emph{dual error identity} reads
\begin{equation}\label{ei:d}
  J(u-u_h) = (\nabla(u-u_h),\nabla(z-z_h)) = (\nabla u,\nabla (z-z_h)) = J(u)-(\nabla u,\nabla z_h),
\end{equation}
where $z_h\in V_h\subset \V$ is the discrete solution to the adjoint
problem.

Both simple error identities~(\ref{ei:p}) and~(\ref{ei:d}) cannot be
used in practice since the adjoint solution $z\in \V$ and the primal
solution $u\in \V$ are not known. Applying the DWR method calls for
an approximation of the primal or adjoint solutions in a subspace 
$V_{hh}\subset \V$ which is not a subspace of the discrete space,
e.g. $V_{hh}\not\subset V_h$ and to approximate the error  by 
\begin{equation}\label{ei:a}
  J(u-u_h) \approx \eta_h(u_h,z_{hh}):=(f,z_{hh}) - (\nabla u_h,\nabla z_{hh}).
\end{equation}
By $\eta_h(u_h,z_{hh})$, we denote the error estimator. It only depends 
on the primal and adjoint discrete solution and it is therefore computable.
Likewise, a computable estimator could be defined based on the adjoint error identity~\ref{ei:d}. While both formulations are equivalent for linear problems, a combination of primal and dual form is required in the general nonlinear case, see~\cite{BeckerRannacher2001}.

Various approaches are discussed in~\cite[Sec. 5]{BeckerRannacher2001}
or \cite[Sec. 3]{RichterWick2015} . In general, they are based on the higher-order
postprocessing of same-space approximations. This reconstruction of higher
order information is of an approximative type such that the DWR method
usually does not give a rigorous error bound but only a computational
measure to estimate the error in practical applications. This
approximation is usually highly accurate. 

Given that the exact error $J(u-u_h)$ is known, the accuracy of the
error estimator can be numerically validated by considering the
\emph{effectivity index}, which is the quotient of estimator value 
$\eta_h(u_h,z_{hh})$ defined in~\eqref{ei:a} and real error
\begin{equation}\label{effind}
  \text{eff}_h := \frac{\eta_h(u_h,z_{hh})}{J(u-u_h)}.
\end{equation}
For linear elliptic problems one usually observes effectiveness going to 1, as $h\to 0$.

The DWR method is easily extended to nonlinear problems, to systems of
differential equations and to time dependent problems. All these and 
further extensions and various applications have already been
demonstrated by Becker and	Rannacher~\cite{BeckerRannacher2001}. 
The fundamental problem that is
still open is a reliable and efficient procedure for approximating
the weights and, in the case of nonlinear problems, bounds on a higher
order remainder that must usually be dropped. 

\subsection{Estimating the network error for the Laplace
  equation}\label{sec:dwr:net} 

Within this framework we now aim at estimating the functional error of the 
neural network solution $u_\N\in V_\N$ obtained with the \emph{Deep Ritz} 
approach. Since the network minimizer $u_\N\not\in\V=H^1_0(\Omega)$ does not 
satisfy the Dirichlet condition $u=0$, the error $\phi:=u-u_\N$ is no 
admissible test function for the variational formulation of the adjoint 
problem~\eqref{adjoint:var}. Hence, we multiply both sides of the classical 
formulation $-\Delta z = J$, see~\eqref{adjoint:strong},  with $\phi=u-u_\N$, 
integrate over $\Omega$ and a consistency term on the boundary remains
\begin{equation}\label{ne:1}
  J(u-u_\N) = (\nabla (u-u_\N),\nabla z) + \langle \partial_n z,u_\N\rangle_{\partial\Omega}.
\end{equation}
With $(\nabla u,\nabla z)=(f,z)$ we  derive the error identity
\begin{equation}\label{ne:2}
  J(u-u_\N) = (f,z)-(\nabla u_\N,\nabla z) + \langle \partial_n z,u_\N\rangle_{\partial\Omega}.
\end{equation}
Again, we must approximate $z\in \V$ by a discrete solution which
is accurate, efficiently achievable and which does not fall into the
vicinity of Galerkin orthogonality, which, in the case of the neural
network error $u-u_\N$ imposes the condition $z_H\not\in
V_\N$. Since for the neural network spaces it naturally holds 
$V_h\not\subset V_\N$. We will approximate the adjoint solution in coarse 
finite element spaces, i.e.  $z_H\in V_h\subset \V$. 
Hereby, we introduce the a posteriori error estimator $\eta(u_\N,z_H)$ as
\begin{equation}\label{ee:net}
  \eta(u_\N,z_H) := (f,z_H) - (\nabla u_\N,\nabla z_H) + \langle\partial_n
  z_H, u_\N\rangle_{\partial\Omega}.
\end{equation}
This error estimator is efficiently evaluated on the finite element mesh using a numerical quadrature rule within the domain and along the boundaries. 
The accuracy of the estimate is measured by means of the
effectivity index~\eqref{effind}. We finally note that the
error estimator~\eqref{ee:net} is not specific to the \emph{Deep Ritz}
method. Instead it could also be used in the context of
\emph{DeepXDE} \cite{Lu2021}, or for any other
approximation technique that yields a $H^1$-conforming
solution.

\begin{remark}[Considering high dimensional problems]\label{remark:high}
The \emph{Deep Ritz} method~\cite{EYu2017} has the potential to be
more efficient than conventional grid-based methods such as the finite
element method, especially for high-dimensional problems. The
application of the error estimator to this case will naturally raise
doubts, since for very high dimension the approximation of the dual
problem would not be feasible. In that case, the dual problem should also be
represented using a neural network. Since it is necessary 
that the reconstruction of the dual solution comes from a space that
is not included in the primal space, a different network architecture
or, for example, a different activation function should be used for
the dual solution.

To be specific, let ${\cal A}$ be a different network architecture and 
let $z_{\cal A}\in V_{\cal A}$ be the adjoint solution herein. Then, the 
estimator can be estimates as 
  \[
  \eta(u_\N,z_{\cal A}) := (f,z_{\cal A}) - (\nabla u_\N,\nabla z_{\cal A})+\langle
  \partial_n z_{\cal A},u_\N\rangle_{\partial\Omega}. 
  \]
  In high dimensions, these integrals cannot be efficiently evaluated by standard mesh-based quadrature rules. Instead, also the estimator must be approximated using stochastic integration via
  \begin{multline*}
    \eta(u_\N,z_{\cal A}) := \frac{1}{N^{in}}\sum_{k=1}^{N^{in}}
    f(x_k^{in})z_{\cal A}(x_k^{in}) - \nabla u_\N(x_k^{in})\cdot \nabla z_{\cal A}(x_k^{in})\\
    +\frac{1}{N^{bnd}}\sum_{j=1}^{N^{bnd}}
    \partial_n z_{\cal A}(x_j^{bnd})u_\N(x_j^{bnd}).
  \end{multline*}

\end{remark}

\subsection{Estimating the network error for the Stokes equations}

The estimate can directly be transferred to the Stokes equations,
where we approximate the solution based on the penalized energy
form~(\ref{energy:stokes}) such as described in
Section~\ref{intro:stokes}. For a linear goal functional
$J:H^1_0(\Omega)^2\to\mathds{R}$ we introduce the adjoint solution
\begin{equation}\label{adjoint:stokes}
  \div\,\zt = 0,\; -\Delta \zt - \nabla q = J\text{ in }\Omega,\quad
  \zt=0\text{ on }\partial\Omega. 
\end{equation}
The error identity for the network solution $\vt_\N$
minimizing~(\ref{energy:stokes}) is then derived as
\[
\begin{aligned}
  J(\vt-\vt_\N) &= \big(\nabla \zt,\nabla (\vt-\vt_\N)\big) +
  \big(q,\div\,(\vt-\vt_\N)\big)
  -\langle \partial_n\zt+q\nt,\vt-\vt_\N\rangle_{\partial\Omega} \\
  &= (\ft,\zt) - \big(\nabla\vt_\N,\nabla\zt\big)
  -\big(\div\,\vt_\N,q\big)
  +\langle\vt_\N, \partial_n\zt+q\nt\rangle_{\partial\Omega}.
\end{aligned}
\]
To evaluate and approximate this error identity we compute a coarse
finite element approximation $(\zt_H,q_H)\in V_h\times L_h$ 
\[
-(\div\,\zt_H,\xi_H)+(\nabla\zt_H,\nabla\phi_H)+(q_H,\div\,\phi_H) =
J(\phi_H)
\quad\forall (\phi_H,\xi_H)\in V_h\times L_h,
\]
and define the Stokes error estimate as
\begin{equation}\label{stokes:estimate}
  \eta(\vt_\N,\zt_H,q_H):=
  (\ft,\zt_H) - \big(\nabla\vt_\N,\nabla\zt_H\big)
  -\big(\div\,\vt_\N,q_H\big)
  +\langle \partial_n \zt_H+q_H\nt,\vt_\N\rangle_{\partial\Omega}.
\end{equation}

\section{Network architecture and training}\label{sec:setup}

Let us recall from the introduction that the network architecture is denoted by $\N$ and a specific neural network function by  $v_\N\in V_\N$. More precisely, we consider fully connected $L$-layer 
neural networks $\N$ with $N_l$ neurons in the $l$-th layer.
We denote the weight matrix and bias vector in $l$-th layer by 
$\mathbf{W}^l\in\mathbb{R}^{N_l\times N_{l-1}}$ and 
$\mathbf{b}^l\in\mathbb{R}^{N_l}$, respectively. An activation function 
$\sigma$  is applied elementwise.

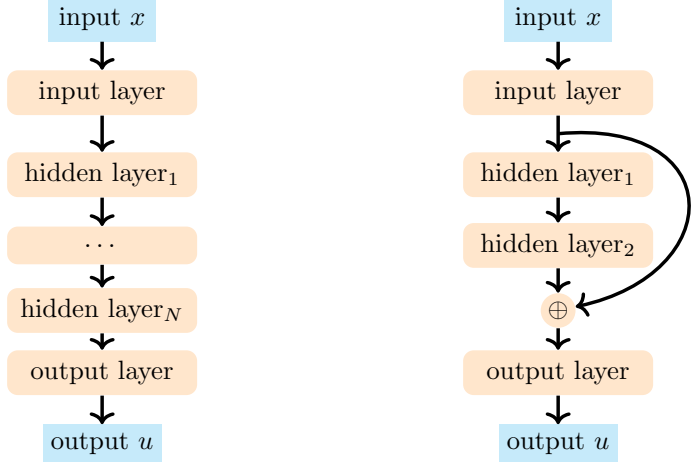
\begin{figure}[t]
  \begin{center}
    \begin{tikzpicture}[scale=0.75]
      \node[rectangle, fill=cyan!20!white,inner sep=4pt] (input) at (1.5, 5.5) 
	   {input $x$};		
	   \node[rectangle, rounded corners, minimum width = 2.5cm, minimum height = 
	     0.5cm, fill=orange!20!white,inner sep=4pt] (layer-begin) at (1.5, 4.2) 
		{input layer};	
		\node[rectangle, rounded corners, minimum width = 2.5cm, minimum height = 
		  0.5cm, fill=orange!20!white,inner sep=4pt] (layer-mid) at (1.5, 2.75) 
		     {hidden layer$_1$};		
		     \node[rectangle, rounded corners, minimum width = 2.5cm, minimum height = 
		       0.5cm, fill=orange!20!white,inner sep=4pt] (layer-end) at (1.5, 1.5) 
			  {$\dots$};		
			  \node[rectangle, rounded corners, minimum width = 2.5cm, minimum height = 
			    0.5cm, fill=orange!20!white,inner sep=4pt] (otimes) at (1.5, .35) {hidden 
			    layer$_N$};		
			  \node[rectangle, rounded corners, minimum width = 2.5cm, minimum height = 
			    0.5cm, fill=orange!20!white,inner sep=4pt] (layer-output) at (1.5, -.75) 
			       {output layer};	
			       \node[rectangle, fill=cyan!20!white,inner sep=3pt] (output) at (1.5, -2.) 
				    {output $u$};
				    
				    \draw[line width=1.3pt,->] (input) -- (layer-begin);
				    \draw[line width=1.3pt,->] (layer-begin) -- (layer-mid);
				    \draw[line width=1.3pt,->] (layer-mid) -- (layer-end);
				    \draw[line width=1.3pt,->] (layer-end) -- (otimes);
				    \draw[line width=1.3pt,->] (otimes) -- (layer-output);
				    \draw[line width=1.3pt,->] (layer-output) -- (output);
				    
				    \node[rectangle, fill=cyan!20!white,inner sep=4pt] (input) at (9.5, 5.5) 
				         {input $x$};		
				         \node[rectangle, rounded corners, minimum width = 2.5cm, minimum height = 
				           0.5cm, fill=orange!20!white,inner sep=4pt] (layer-begin) at (9.5, 4.2) 
				              {input layer};	
				              \node[rectangle, rounded corners, minimum width = 2.5cm, minimum height = 
				                0.5cm, fill=orange!20!white,inner sep=4pt] (layer-mid) at (9.5, 2.75) 
				                   {hidden layer$_1$};		
				                   \node[rectangle, rounded corners, minimum width = 2.5cm, minimum height = 
				                     0.5cm, fill=orange!20!white,inner sep=4pt] (layer-end) at (9.5, 1.5) 
				                        {hidden layer$_2$};		
				                        \node[circle, fill=orange!20!white,inner sep=1pt] (otimes) at (9.5, 0.35) 
				                             {$\oplus$};		
				                             \node[rectangle, rounded corners, minimum width = 2.5cm, minimum height = 
				                               0.5cm, fill=orange!20!white,inner sep=4pt] (layer-output) at (9.5, -.75) 
				                                  {output layer};	
				                                  \node[rectangle, fill=cyan!20!white,inner sep=3pt] (output) at (9.5, -2.) 
				                                       {output $u$};
				                                       
				                                       \draw[line width=1.3pt,->] (input) -- (layer-begin);
				                                       \draw[line width=1.3pt,->] (layer-begin) -- (layer-mid);
				                                       \draw[line width=1.3pt,->] (layer-mid) -- (layer-end);
				                                       \draw[line width=1.3pt,->] (layer-end) -- (otimes);
				                                       \draw[line width=1.3pt,->] (otimes) -- (layer-output);
				                                       \draw[line width=1.3pt,->] (layer-output) -- (output);
				                                       
				                                       \draw[line width=1.3pt,->] 	
				                                       ($(layer-begin)!0.5!(layer-mid)$)..controls(12.5,3.75) and 
				                                       (12.5,1)..(otimes);
				                                       
				                                       
    \end{tikzpicture}
  \end{center}
  \caption{Feed Forward Neural Network and Residual Neural Network}
  \label{fig:network}
\end{figure}

We consider two different architectures. First, a standard \emph{feed
  forward neural network (FFNet)} 
\begin{equation*}
  \begin{aligned}
    v_\N^{(0)}(x) & := =x\in\Omega, \\
    v_\N^{(l)}(x) & :=\sigma(\mathbf{W}^l v_\N^{(l-1)}(x)+\mathbf{b}^l),\quad
    l=1,2,\dots,L,\\
    v_\N(x) &= \mathbf{W}^{L+1}v_\N^{(L)}+\mathbf{b}^{L+1}
  \end{aligned}
\end{equation*}
and second a \emph{residual neural network (ResNet)} that has also
been considered in the original formulation of the \emph{Deep Ritz}
method~\cite{EYu2017} 
\begin{equation*}
  \begin{aligned}
    v_\N^{(0)}&=x,\\
    v_\N^{(l)}(x)&=
    v_\N^{(l-2)}(x)+
    \sigma\left(\mathbf{W}^l\sigma\left(\mathbf{W}^{l-1}
    v_\N^{(l-2)}(x)+\mathbf{b}^{l-1}\right)+\mathbf{b}^l\right),\quad l=2,4,\dots,L,\\
    v_\N(x)&=\mathbf{W}^{L+1}v_\N^{(L)}(x)+\mathbf{b}^{L+1}.
  \end{aligned}
\end{equation*}
With the employed notation $v_\N^{(0)}$ is the input layer with $l_0=d$ and 
$v_\N$ is the output layer with $N_{l+1}=c$. 
All hidden layers are of the same size $H$, i.e. $l=H$ for $1\leq l \leq L$

\subsection{Training}\label{sec:training}

This section presents some insights into the training 
process for the \emph{Deep Ritz} method applied to Laplace
problem on a L-shaped domain. We refer to Section~\ref{sec:num1}
for a  precise definition of the test case.  Here, we study the
effect of the network architecture, i.e. a Feed Forward Neural
Network (FFNet) and a Residual Neural Network (ResNet) on the
training. To train the neural network, we use the Adam optimizer~\cite{kingma2017adam}.

\begin{figure}[t]
  \begin{center}
    \includegraphics[width=1.\textwidth]{./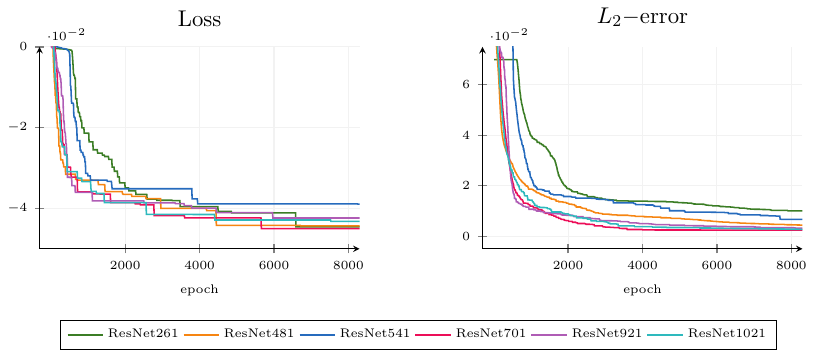}
  \end{center}
  \caption{Training progress for networks of various size.}
  \label{fig:netsize}
\end{figure}

\begin{figure}[t]
  \begin{center}
    \includegraphics[width=1.\textwidth]{./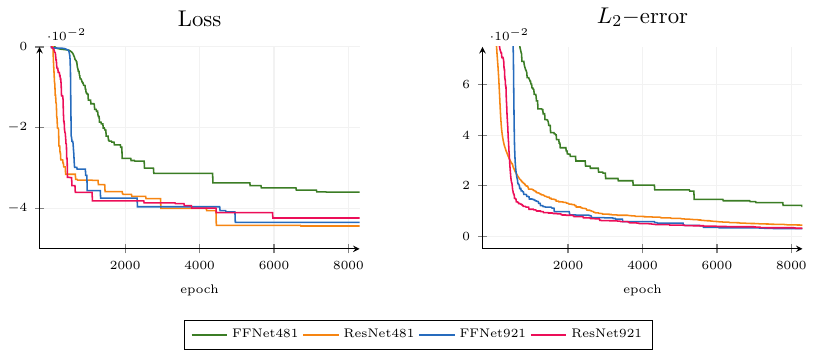}
  \end{center}
  \caption{Training progress for networks of similar size}
  \label{fig:samesize}
\end{figure}

In Figure \ref{fig:netsize} we present training progress for
residual networks of various sizes. In the left
sketch, we show the loss function, i.e. the value of the
penalized and approximated energy
functional~(\ref{lap:lossfunction}) and on the right, we show
the $L^2$ error of the resulting approximations
$\|u-u_{\N}\|$ during training. 
In general, the larger the network, the fewer epochs are needed
to reach a certain error. However, this is not always the
case.  We observe a~certain threshold for the number of
network parameters above which increases the size of the
network does not improve the solution. To further increase the accuracy, 
we would also have to adjust the number of quadrature points accordingly. 
Finally, the network's training plays an important role, making this 
optimisation error difficult to control. In general, the slope
of the loss function is similar to the progress of the
$L^2$-error. Naturally, once low loss levels are reached,
larger networks can yield better approximations. 

In Figure \ref{fig:samesize} we present the training and
approximation progress of networks with  
Feed Forward and Residual architectures and the same sizes. To be
precise, for each architecture, we consider a small network
with $481$ parameters and a larger one with $921$ parameters. The
residual network is faster to train, but this discrepancy gets
smaller for larger networks. The advantage of residual
networks was already mentioned by E and Yu~\cite{EYu2017}. 	

The above considerations show the need for a quality measure
of the solution that works across architectures and training
methods. In the following section, we will present numerical
examples that demonstrate the usability of the error estimator
for controlling the approximation error during training. This
estimate can be used as a stopping criterion once a
sufficiently low error level is reached.

\begin{figure}[t]
  \begin{center}
    \includegraphics[width=0.18\textwidth]{./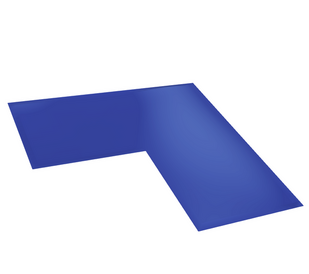}
    \includegraphics[width=0.18\textwidth]{./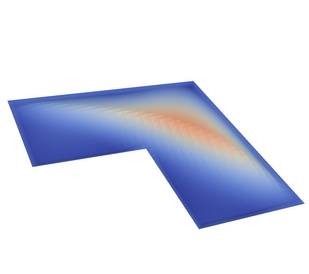}
    \includegraphics[width=0.18\textwidth]{./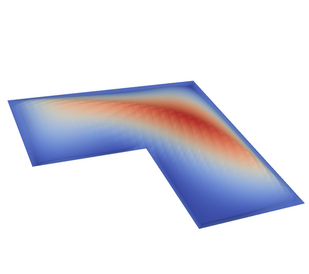}
    \includegraphics[width=0.18\textwidth]{./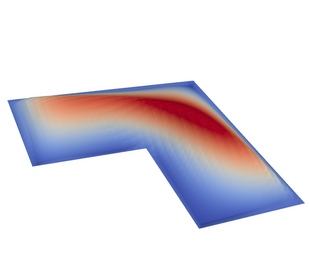}    
    \includegraphics[width=0.18\textwidth]{./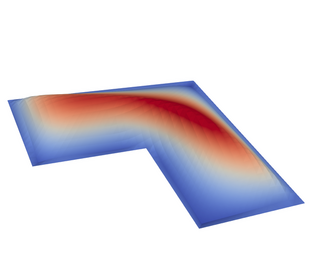}      
    \includegraphics[width=0.18\textwidth]{./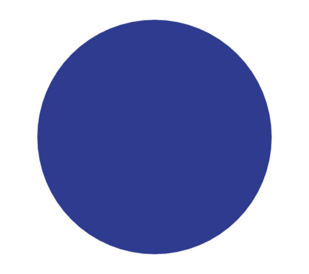}
    \includegraphics[width=0.18\textwidth]{./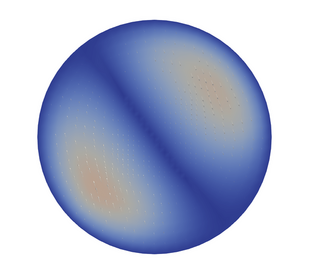}
    \includegraphics[width=0.18\textwidth]{./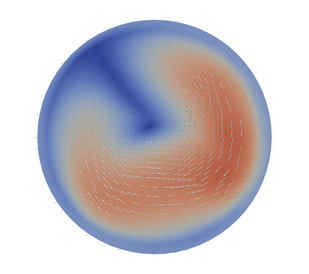}
    \includegraphics[width=0.18\textwidth]{./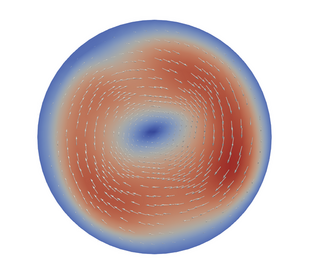}    
    \includegraphics[width=0.18\textwidth]{./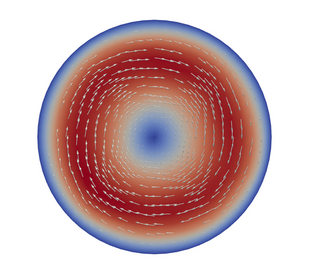}    
    \caption{Solution of Laplace (top) and Stokes (bottom) during training 
      progress and epochs $0,2500,5000,7500,10000$ for the Laplace
      test case and at epochs $0,2000,5000,10000,25000$ in case of the
      Stokes problem.}
    \label{fig:training}
  \end{center}
\end{figure}

\section{Numerical examples}\label{sec:num}

We will discuss two test cases, the Laplace equation on a $L$-shaped
domain and the Stokes equations on a disc.
Figure~\ref{fig:training} shows the solution to both problems obtained
with the \emph{Deep Ritz} method during the network training. 

\subsection{Test Case 1. Laplace equation}\label{sec:num1}

\begin{SCfigure}[2][t]
  \centering
  \includegraphics[width=0.3\textwidth]{./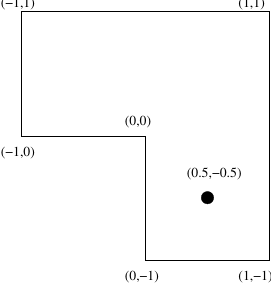}\hspace{0.5cm}
  
  \caption{L-shaped domain and evaluation point $x_a=(0.5,-0.5)$ to
    define test case 1. The Laplace problem is solved with
    homogeneous Dirichlet data and constant right hand side
    $f=1$ such that a corner singularity evolves.}
  \label{fig:ldomain}
\end{SCfigure}

As first test case we consider the Poisson equation on the
$L$-shaped domain $\Omega_L = (-1,1)^2\setminus[0,1]^2$ shown
in Figure~\ref{fig:ldomain}. The quantity of interest is the evaluation of
the solution in the point $x_a=(0.5,-0.5)\in \Omega_L$ 
\begin{equation}\label{ex1:equation}
  -\Delta u = 1\text{ in }\Omega_L,\quad
  u=0\text{ on }\partial\Omega_L,\quad J(u) = u(x_a).
\end{equation}
Since $J\not\in H^{-1}(\Omega_L)$ is not an admissible functional it
should be replaced by averaging over a small neighbourhood of 
the point $x_a$. 
	This is discussed in~\cite[Sec. 5.2]{RichterWick2015}. 
	Also, the reentrant corner of the L-shaped geometry reduces the regularity of the solution, so that the superapproximation results, which are the basis for reconstructing the solution, cannot be used stringently. On the other hand, it is well documented that the 
non-regularized functional limited solution regularity nevertheless gives optimal performance in the context of the dual 
weighted residual method, see~\cite{BeckerRannacher2001,RichterWick2015}. For 
comparison, we first determine a reference value by finite element 
simulations on highly refined  meshes. We identify it as
\[
J_{ref} = 0.1024 \pm 0.0020.
\]
First we demonstrate the performance of the DWR estimator 
\[
\eta(u_{\cal N},z_h) = (f,z_h) - (\nabla u_\N,\nabla z_h) + 
\langle \partial_n z_h, u_\N\rangle_{\partial\Omega}
\]
as presented in Section~\ref{sec:dwr:net}. 
The adjoint solution $z_h$ will be computed as finite element
approximation on very coarse meshes. 
Training results and estimator values are shown for neural
network solutions obtained with the \emph{Deep Ritz} method
and using the strong formulation. Both network architectures
of FFNet type and of ResNet type are considered. The complete
set of parameters is summarized as follows:
\begin{itemize}
\item FFNet:\, $H = 20$, $L=4$, $\sigma(x) = \operatorname{ELU}(x)$,
\item ResNet: $H = 20$, $L=2$, $\sigma(x) = \max(x^3,0)$.
\end{itemize}
Since each ResNet block consists of two layers, both architectures
have same number of parameters. The \emph{Exponential Linear Unit
  (ELU)} is  defined as  
\[
\operatorname{ELU}(x)=\left\{\begin{array}{ll}
x & \text { if } x\ge 0 \\
e^{x}-1 & \text { if } x<0.
\end{array}\right.
\]

\begin{figure}[t]
  \begin{center}
    \includegraphics[width=1.\textwidth]{./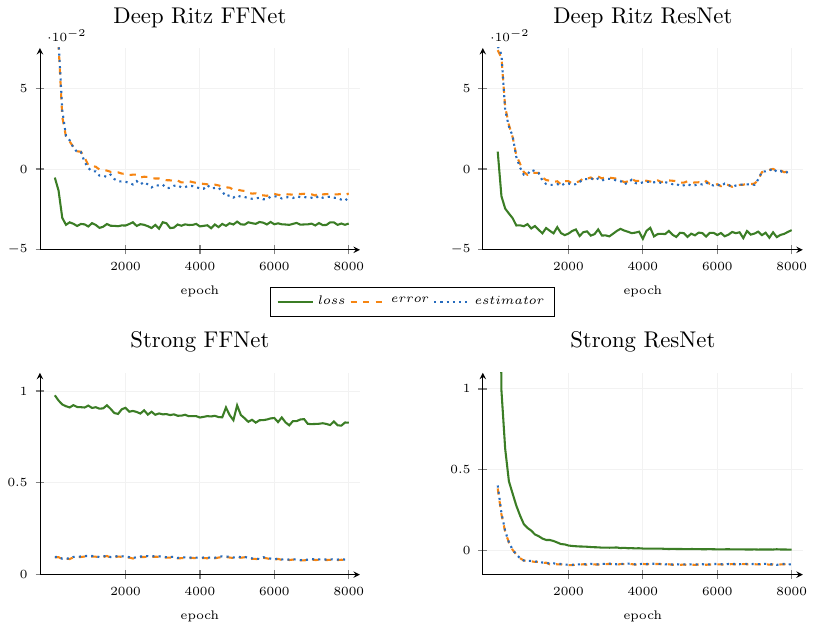}
  \end{center}
  \caption{Loss, error and estimator for different network architectures and 
    Loss functions.}
  \label{fig:dr_strong_training}
\end{figure}

For the \emph{Deep Ritz} approach and the strong formulation
all gradients are computed both with automatic differentiation
and finite difference approximation, respectively. We perform
8000 epochs and  the estimator is evaluated every $100$ epochs,
see~Figure~\ref{fig:dr_strong_training}, where we show the loss
function (bold green line), the functional error $J(u-u_\N)$ (dashed
orange line) and the error estimator $\eta(u_\N,z_h)$ (dotted blue
line). 
We note that functional errors $J(u)-J(u_\N)$ are
generally signed. Hence, convergence in
Fig.~\ref{fig:dr_strong_training} cannot be expected to be monotone
and also, errors are not necessarily positive.

One can observe that independently of the applied method, the
estimator follows the error and gives a highly accurate error approximation. 
Consequently we study the dependence on the coarse mesh size
$h\in \{0.5,0.25, 0.0625 \}$ used to approximate the adjoint
solution and show the results in
Figure~\ref{fig:levels}. Increasing the level of  
refinement improves the exactness of the estimator. 
The results are good even for extremely coarse meshes. 
The estimator is highly efficient and cheap to evaluate such that it brings along very little computational overhead. This allows to use the estimator as stopping criterium while training the network.
The choice
$h=0.5$ corresponds to only $12$ quadrilateral elements, the
finest mesh with $h=0.0625$ corresponds to just $768$
elements. Further, many degrees of freedom reside on the
boundary of the domain such that the number of unknowns to
approximate the adjoint solution ranges from $5$ on the coarsest
mesh to $640$ on the finest mesh and from the results we observe
that the intermediate mesh with $h=0.25$ comprising $16$
unknowns is sufficiently accurate.

\begin{figure}[t]
  \begin{center}
    \includegraphics[width=1.\textwidth]{./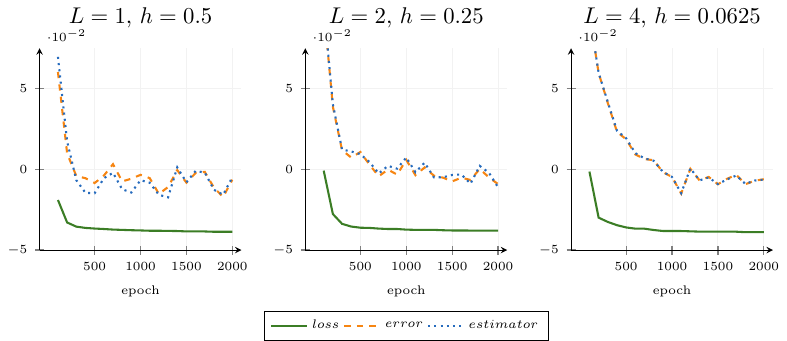}
  \end{center}
  \caption{Loss, error and estimator for different refinement levels of dual 
    solution \mbox{$h=2^{-L}$.}}
  \label{fig:levels}
\end{figure}

\begin{table}[t]
  \centering
  \resizebox{\textwidth}{!}{
    \begin{tabular}{r|ccc|ccc|ccc}
      \toprule
      & \multicolumn{3}{c}{L=1}  & \multicolumn{3}{c}{L=2} & 
      \multicolumn{3}{c}{L=4} \\  
      \text{epoch} & \text{error} & \text{estimate} & $\text{eff}_h$ & 
      \text{error} & \text{estimate} & $\text{eff}_h$& \text{error} & 
      \text{estimate} & $\text{eff}_h$
      \\	\midrule
      500 & -0.008480 & -0.014786 & 0.57 & 
      0.010748 &	0.009397 & 1.14 & 
      0.018578 &	0.019248 & 0.96 \\ 
      1000 & -0.003403 & -0.006700 & 0.51 &
      0.005855 &	0.007324 & 0.80 &
      -0.004695 & -0.004719 & 0.99 \\
      1500 & -0.008228 & -0.008022 & 1.03 &
      -0.007460 & -0.003477 & 2.15 &
      -0.009242 & -0.009126 & 1.01 \\
      2000 & -0.006424 & -0.004946 & 1.23 &
      -0.009138 & -0.011058 & 0.83 &
      -0.006117 & -0.006393 & 0.96 \\
      \bottomrule
  \end{tabular}}
  \caption{The values of error and estimator with
    effectivity index $\text{eff}_h$, see (\ref{effind}), for
    different refinement levels of dual solution
    \mbox{$h=2^{-L}$.}} 
  \label{tab:lapalce}
\end{table}

\subsection{Test Case 2. Stokes equations}

For the second test case we consider the Stokes equations on the
unit circle $\Omega = \{x\in\mathds{R}^2,\; |x|_2<1\}$. We prescribe an
analytical solution for comparison with the neural network  
approximation given by 
\[
\vt(x,y) =
\cos\left(\frac{\pi}{2}(x^2+y^2)\right)\begin{pmatrix}\phantom{-}y \\ -x
\end{pmatrix}
\]
and compute the corresponding forcing term as 
\[
\ft(x,y) =\pi \cos\left(\frac{\pi}{2}(x^2+y^2)\right)
\begin{pmatrix} \phantom{-}y(x^2+y^2) \pi 
  + 
  4(y-x)\tan\left(\frac{\pi}{2}(x^2+y^2)\right)\\
  -x(x^2+y^2) \pi
  -  4(x+y)\tan\left(\frac{\pi}{2}(x^2+y^2)\right) 
\end{pmatrix}.
\]
The functional of interest $J(\vt)$ is an integral of a
y-component of the velocity on a line segment $[0,1]$
\[
J(\vt):= \int_0^1 \vt_y(x,0) \dd x ,\quad J_{ref} = -\frac{1}{\pi}.
\]
In Figure~\ref{fig:eststokes} we present the loss function and the 
functional error as well as the error estimator. The training of the
Deep Ritz method is performed for  $25000$ epochs, with the
Feedforward Neural Network (FFNet: $d = 2$, $c=2$, $H = 10$, $L=20$,
$\sigma(x) = \operatorname{ELU}(x)$).  
The adjoint Stokes problem is approximated with equal order finite
elements using pressure stabilization on a coarse mesh level $L=3$
that corresponds to   $h\approx 0.04375$. The results for some
selected  epochs together with effectivity  index are summarized in
Table~\ref{tab:stokes}. The error estimator is highly accurate and robust over the complete training process such that it can be used as stopping criterium.
In particular for $L=4$ the effectivities are very close to one and the 
estimator values deviate by less than $5\%$ from the true error. Given the 
limited regularity of the problem (reentrant corner and quantity of interest 
that is not a linear functional in $H^1(\Omega)$) this result is remarkable. 
  
\begin{figure}[t]
\begin{center}
    \includegraphics[width=1.\textwidth]{./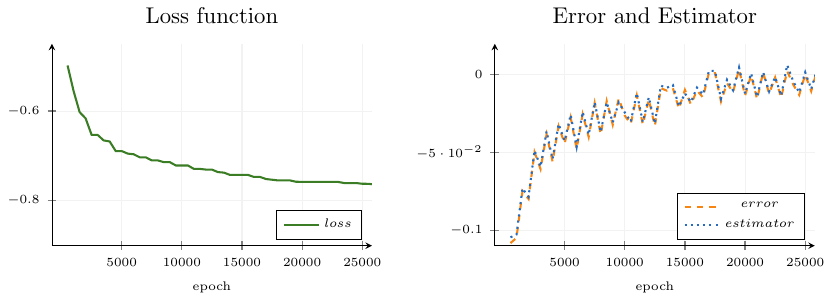}
  \end{center}
  \caption{Loss, error and estimator for Stokes problem.}
  \label{fig:eststokes}
\end{figure}

\begin{table}[t]
  \centering
  \begin{tabular}{r|ccc}
    \toprule
    \text{epoch} & \text{error} & \text{estimate} & $\text{eff}_h$\\	\midrule
    5000 & -0.0442546 & -0.0425343 & 1.04 \\ 
    10000 & -0.0262763 & -0.0241991 & 1.08 \\
    15000 & -0.0096646 & -0.0118802 & 0.81 \\
    20000 & -0.0134781 & -0.0121668 & 1.10 \\
    \bottomrule
  \end{tabular}
  \caption{The values of error and estimator with
    effectivity index $\text{eff}_h$, see \eqref{effind}.}
  \label{tab:stokes}
\end{table}

The numerical implementation is realized in the finite element toolkit
Gascoigne 3D~\cite{Gascoigne3d}, which is coupled to the machine learning framework PyTorch \cite{PyTorch}.

\section{Conclusion}\label{sec:conc}

In this article, we have used different tools from the finite element
analysis to get a deeper insight into the neural network approximation of
partial differential equations obtained with \emph{Physics-Informed
Neural Networks}. In particular, we used standard tools of error analysis 
to interpret the generalization error as consistency error arising from faulty 
numerical quadrature. Further, based on the \emph{dual weighted
residual method} we have derived an a posteriori error estimator that can be 
used to measure the error of previously defined networks.
The efficiency and accuracy of the estimator has been numerically demonstrated 
in applications to the Laplace and the Stokes problem. 
The method is independent of  
the design of the neural network and the training procedure. The
evaluation on a very coarse meshes already shows very good accuracy,
such that little computational overhead is brought along. The
estimator can be used as an accurate and straightforward stopping
criterion during the training process. Hereby, we gain a first
validation of the neural network approximation, and the error
controlled training also helps reduce the computational effort by
avoiding excessive training epochs.

\section*{Acknowledgements}
Both authors acknowledge the financial support by the Deutsche 
Forschungsgemeinschaft, GRK 2297 MathCoRe, grant number 314838170. Furthermore, 
we thank the anonymous reviewers for their comments that helped us to improve 
the manuscript.


\end{document}